\newtheorem{lemma}{Lemma}
\title{A nonexistence criterion and new constructions for Butson Hadamard matrices}
\author{Domonkos Czifra, M\'at\'e Matolcsi, Ferenc Sz\"oll\H{o}si}
\thanks{November 14, 2025. Preprint.\\
M.~M.~was supported by grants NKFIH-146387, NKFIH-154121 and KKP 133819. 
D.~C. was supported by the Ministry of Innovation and Technology NRDI Office within the framework of the Artificial Intelligence National Laboratory (RRF-2.3.1-21-2022-00004). F. Sz. was supported in part by JSPS KAKENHI Grant Number 24K06829}
\newtheorem{theorem}{Theorem}
\newtheorem{conjecture}{Conjecture}
\theoremstyle{definition}
\newtheorem{example}{Example}
\newcommand{\Z}{\mathbb{Z}}
\newcommand{\R}{\mathbb{R}}
\begin{document}
\begin{abstract}
Based on the concept of positive definite functions on finite groups, we present a new necessary condition  for the existence of Butson Hadamard matrices $BH(n,q)$. We use this condition to prove some nonexistence results for a sequence of values of $(n,q)$, and also to facilitate a computer search and discover a matrix $BH(18,14)$.

Furthermore, we use cyclotomic cosets to construct matrices $BH(34,10)$, $BH(62,6)$, $BH(82,6)$, and $BH(146,6)$ for the first time. These matrices have  a $2$-circulant structure.
\end{abstract}

\maketitle

\section{Introduction}

Butson Hadamard matrices $BH(n,q)$ are $n\times n$ complex matrices with all entries being $q$th roots of unity, and the rows (or, equivalently, the columns) being orthogonal to each other with respect to the standard complex inner product $\left\langle\cdot,\cdot\right\rangle$ of $\mathbb{C}^n$. This class of matrices provides a natural generalization of \textit{real} Hadamard matrices. The fundamental question of characterizing the pairs $(n,q)$ for which $BH(n,q)$ matrices exist remains wide open. At present, our knowledge is restricted to some sporadic constructions \cite{but}, \cite{daw}, \cite{MW}, \cite{sebyam} and a few nonexistence criteria in the literature \cite{BBS}, \cite{BROCK}, \cite{dL}, \cite{LL}, \cite{W}. 

\medskip

The theory of positive definite functions on finite groups was recently applied in \cite{M} to study real Hadamard matrices. In Section~\ref{sec2} we revisit this approach and adapt it to the case of Butson Hadamard matrices $BH(n,q)$. As a result, we present a new necessary condition for their existence. On the one hand, this condition can naturally be used to prove nonexistence results for certain values $(n,q)$. On the other hand, for other values $(n,q)$, it implies some necessary properties that a $BH(n,q)$ matrix must satisfy and, as such, it facilitates a computer search for such matrices. We have carried out such a search for $BH(18,14)$, and present such a matrix in Section \ref{sec3}. The existence of this matrix fills a gap in the online catalogue of Butson Hadamard matrices of relatively small order\footnote{See \url{https://wiki.aalto.fi/display/Butson/Matrices+up+to+monomial+equivalence}}. Based on this discovery and other smaller examples \cite[p.~105]{AA}, \cite{LOS}, we make the following conjecture. 

\begin{conjecture}\label{conj1}
For any prime $p$, Butson Hadamard matrices $BH(2p+4, 2p)$ exist.     
\end{conjecture}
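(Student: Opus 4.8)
Since Conjecture~\ref{conj1} is open, I only outline a plausible line of attack, built on the $2$-circulant constructions already exploited in this paper. Write $m=p+2$, so that $2p+4=2m$. The plan is to produce circulant $m\times m$ matrices $A$ and $B$ with entries in the group $\mu_{2p}$ of $2p$-th roots of unity — note $-1=\zeta_{2p}^{\,p}\in\mu_{2p}$ since $2p$ is even — subject to the single matrix identity $AA^{*}+BB^{*}=2m\,I_{m}$. Because circulants of the same order commute, one then checks at once that
\[
M=\begin{pmatrix} A & B\\ -B^{*} & A^{*}\end{pmatrix}
\]
is a $BH(2p+4,2p)$ matrix with $2$-circulant structure. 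Passing to the spectral side, if $a,b\colon\Z_{m}\to\mu_{2p}$ denote the first rows of $A,B$ and $\widehat a(\chi)=\sum_{j\in\Z_{m}}a_{j}\chi(j)$ over the characters $\chi$ of $\Z_{m}$, the identity becomes the pointwise balance $|\widehat a(\chi)|^{2}+|\widehat b(\chi)|^{2}=2m$ for every $\chi$; equivalently, the sum of the periodic autocorrelations of $a$ and $b$ vanishes at every nonzero shift. So everything reduces to exhibiting, for each prime $p$, a complementary pair of $\mu_{2p}$-valued sequences of length $p+2$.

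When $p+2$ is a prime (equivalently $p$ is a lesser twin prime), or more generally a prime power, the natural attempt is to prescribe $a$ on the cosets of a subgroup of $\mathbb{F}_{p+2}^{\times}$ (or of $(\Z/(p+2))^{\times}$), so that $\widehat a$ is, up to a principal-character correction, a Gauss sum of constant modulus $\sqrt{p+2}$ on all nonprincipal characters, and then let $b$ absorb the leftover defect. The delicate point, and the place where the specific pair $(n,q)=(2p+4,2p)$ must really be used, is the \emph{realizability} of the resulting magnitudes inside $\mu_{2p}=\{\pm\zeta_{p}^{\,j}\}$: one has to match the quadratic Gauss sum value, which is $\pm\sqrt{p+2}$ or $\pm i\sqrt{p+2}$ according to $p+2\bmod 4$, together with the correction term, against short sums of $2p$-th roots of unity, and this forces congruence conditions on $p$ modulo $4$ (and presumably modulo $8$) and uses $p+2\equiv 2\pmod p$. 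It is not automatic even when $p+2$ is a prime power: for $p=7$ we have $p+2=9$, yet $BH(18,14)$ admitted no such closed form and was found only by the computer search of Section~\ref{sec3}. Thus from this route I would expect to obtain one or more infinite subfamilies of Conjecture~\ref{conj1}, but not all prime-power values of $p+2$.

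The main obstacle is the remaining primes $p$ with $p+2$ genuinely composite — $p=13,19,31,37,\dots$, where $p+2=15,21,33,39,\dots$ — for which there is no field of that order, cyclotomy in $\Z_{p+2}$ is weaker, and it is not even clear that a complementary $\mu_{2p}$-valued pair of that length exists, let alone from a uniform recipe. Partial remedies: when $p+2=r^{2}$ (as for $p=23,47,\dots$) the quadratic character of $\mathbb{F}_{r^{2}}$ can be used as above; and in general one may relax the circulant hypothesis to a $\Z_{2}\times\Z_{m}$-developed (cocyclic) matrix and look for the corresponding difference family, or try to split off a small factor such as a hypothetical $BH(p+2,2p)$ and tensor it with $BH(2,2p)$ — though this merely relocates the difficulty. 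In all cases one should first confirm that the nonexistence criterion of Section~\ref{sec2} never obstructs $BH(2p+4,2p)$ (the conjecture presupposes this), and then, exactly as in the discovery of $BH(18,14)$, use the structural properties that criterion forces to steer a targeted computer search for the sporadic $p$ not covered by any of the above families. Realistically, I expect a full proof of Conjecture~\ref{conj1} to combine one or two cyclotomic infinite families with a genuinely new construction — or an unconditional existence theorem for the relevant complementary sequences — handling the composite-$p+2$ case; that last ingredient is where I anticipate the real difficulty.
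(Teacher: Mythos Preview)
The paper does \emph{not} prove Conjecture~\ref{conj1}; it is posed as an open problem, supported only by small examples from the literature and the single new instance $BH(18,14)$ (the case $p=7$) found by the computer search of Section~\ref{sec3}. You recognise this correctly at the outset, and what you have written is a research outline rather than a proof, so there is no ``paper's own proof'' to compare against.

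A brief comment on the substance of your outline: your proposed attack is via $2$-circulant blocks of order $m=p+2$, i.e.\ the machinery of Section~\ref{sec4}. Note, however, that the paper's actual construction of $BH(18,14)$ in Section~\ref{sec3} does \emph{not} use that machinery at all; it proceeds by exploiting the positive-definiteness criterion of Theorem~\ref{main} to force a particular $9+9$ block structure (with $7\times 7$ circulant sub-blocks appearing only after the fact as a typographical convenience), followed by an exhaustive search under several heuristic assumptions imported from $BH(14,10)$ examples. So your program and the paper's one successful data point live in genuinely different frameworks. Your identification of the composite-$(p+2)$ case as the hard one is reasonable, and your observation that even the prime-power case $p+2=9$ resisted a closed form is on target; but be aware that in Section~\ref{sec4} the cyclotomic classes live inside $\mathbb{Z}_p^\ast$ for the \emph{prime} block size $p$, so transplanting that method to block size $p+2$ already requires new ideas in all but the twin-prime subcase you single out.
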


Butson matrices $BH(n,q)$ for prime powers $q=p^r$ have been extensively studied. It is well-known that a $BH(n,p^r)$ matrix can only exist when $n=1$, or $n$ is a multiple of $p$, see \cite{W}. For $q=4$ it is conjectured that this divisibility condition is the only restriction \cite{sebyam}. 

In a seminal paper, Compton, Craigen, and de Launey initiated the systematic study of the simplest case when $q$ is not a prime power, and constructed a series of `unreal' $BH(n,6)$ matrices \cite{unreal}. Subsequently the third author of this manuscript visited Craigen, and they constructed $BH(p^2,6)$ matrices for every prime $p$, see \cite[Theorem~1.4.41]{phd}. This result was recently generalized by Schmidt, Wong, and Xiang \cite{schmidt2025}. In Section \ref{sec4} we recall some theory of circulant matrices \cite{BJ}, and present a construction of $BH(62,6)$, $BH(82,6)$, and $BH(146,6)$ matrices, the existence of which has also been open. These examples further support the conjecture (see \cite[Conjecture~1.4.47]{phd}) that $BH(2p,6)$ matrices always exist for any prime number $p$.

\section{A necessary condition for the existence of $BH(n,q)$}\label{sec2}

Let $\zeta_q\in\mathbb{C}$ denote a  primitive complex $q$th root of unity, e.g., $\zeta_q=\mathrm{exp}(2\pi\mathbf{i}/q)$, and let $\Omega_q:=\{1,\zeta_q,\dots,\zeta_q^{q-1}\}$. The set $\Omega_q^n$ is a finite commutative group equipped with the operation of coordinate-wise multiplication. We think of elements of $\Omega_q^n$ as row vectors of length $n$. For row vectors $u,v\in\Omega_q^n$, let us denote by $u\circ v\in\Omega_q^n$ their coordinate-wise product (also known as their \textit{Hadamard product}). Let $\overline{u}$ denote the complex conjugate of $u\in\Omega_q^n$.

\medskip

Assume that a $BH(n,q)$ matrix $H$ exists, with its rows being denoted by $h_1$, $\dots$, $h_n\in \Omega_q^n$. Define the (rescaled) \textit{frequency map of $H$}, $g_H\colon \Omega_q^n\to \mathbb{R}^+_0$, as follows:
\begin{equation}\label{gh}
g_H(x):=\frac{1}{n}\cdot\left|\{(i,j)\colon x=h_i\circ\overline{h}_j,\quad i,j\in\{1,\dots,n\}\}\right|,    
\end{equation}
where the notation $|A|$ stands for the cardinality of any finite set $A$. 

That is, the value of $g_H(x)$ is a nonnegative integer, counting the number of times the Hadamard product of a row $h_i$ of $H$ with the conjugate of another row $h_j$ is equal to the vector $x$ (note that $i=j$ is not excluded). Let $e:=[1,1,\dots,1]\in \Omega_q^n$. As $h_i\circ \overline{h}_i=e$ for every $i$, we clearly have  $g_H(e)=1$. Since the rows of $H$ are orthogonal to each other, all other vectors $x$ where $g_H(x)$ might be nonzero are inside the \textit{orthogonality set}:
\[ORT:=\left\{x\in\Omega_q^n\colon \sum_{i=1}^n x_i=0\right\}.\]
As such, the \textit{support} of $g_H$ satisfies $\mathrm{supp}(g_H):=\{x\in\Omega_q^n\colon g_H(x)\neq 0\}\subseteq ORT\cup\{e\}$. Further, since the total number of coordinate pairs $(i,j)$ is $n^2$, we have:
\[\sum_{x\in\Omega_q^n}g_H(x)=\frac{1}{n}\cdot n^2=n.\]

\medskip

Let us introduce the following  notation: for $x\in\Omega_q^n$ and $z\in \mathbb{Z}_q^n=\{0,1, \dots, q-1\}^n$ let
\[x^z:=\prod_{k=1}^n x_k^{z_k}\in\Omega_q.\]
Consider the Fourier transform of $g_H$, defined as $\widehat{g_H}\colon \mathbb{Z}_q^n\to \mathbb{R}$:
\[\widehat{g_H}(z):=\sum_{x
\in \Omega_q^n} g_H(x)x^z=\sum_{i=1}^n\sum_{j=1}^n(h_i\circ\overline{h}_j)^z.\]
Simple calculation shows that the function $g_H$ is \textit{positive definite}, i.e. $\widehat{g_H}$ is nonnegative. Indeed:
\begin{align}\label{ghhat}
\widehat{g_H}(z)&=\sum_{i=1}^n\sum_{j=1}^n(h_i\circ\overline{h}_j)^z
=\left|\sum_{i=1}^nh_i^z\right|^2\geq 0.
\end{align}
While $g_H$ depends on the concrete choice of $H$, the key insight is that we can perform several averaging steps to obtain mappings with more transparent properties. Namely, let $S_n$ denote the symmetric group of order $n$, and let us consider the $n!$ column permutations of $H$, denoted by $\sigma(H)$ where $\sigma\in S_n$. Define the following average function $g_0\colon \Omega_q^n\to\mathbb{R}$,
\[g_0(x):=\frac{1}{n!}\sum_{\sigma\in S_n}g_{\sigma(H)}(x).\]
Notice that $g_0\ge 0$ and  $\widehat{g_0}=\frac{1}{n!}\sum_\sigma \widehat{g_{\sigma(H)}}\geq 0$. Moreover, the main advantage is that the function $g_0$ is invariant up to coordinate permutations. We call two row vectors $x, y\in\Omega_q^n$ \textit{permutation-equivalent}, and denote this by the symbol $x\sim y$, if and only if there is a permutation $\tau\in S_n$ such that $\tau(x)=y$. Let us define the \textit{orbit} of a vector $x$ as ${\mathrm{orb}(x):=\{y\in\Omega_q^n\colon y\sim x\}}$, and decompose the set $ORT$ into $M$ nonempty permutation-equivalence classes: $ORT=\cup_{i=1}^M\mathrm{orb}(\omega_i)$, where $\omega_i\in \mathrm{orb}(\omega_i)$ is a representative for any $i\in\{1,\dots, M\}$. It follows that $g_0$ admits at most $M+2$ distinct values, and we may write:
\[g_0(x)=\begin{cases}
1,& \text{if } x\sim e\\
c_i,& \text{if } x\sim \omega_i, i\in\{1,\dots, M\}\\
0,& \text{otherwise}.
\end{cases}\]
Clearly, the values $c_i$ must satisfy the \textit{feasibility conditions}: $0\leq c_i\leq \frac{n-1}{|\mathrm{orb}(\omega_i)|}$, together with the following linear constraint:
\begin{equation}\label{lin}
\sum_{x\in \Omega_q^n}g_0(x)=1+\sum_{i=1}^M |\mathrm{orb}(\omega_i)|c_i=n.
\end{equation}
In general, for fixed $n$ and $q$ the representatives $\omega_i$ can be determined, and the orbit sizes can be calculated. However, the values $c_i$ still depend on the choice of $H$ apart from the special case when $M=1$. In this case, using \eqref{lin} above, we have:
\begin{equation}\label{c1}
c_1=\frac{n-1}{|\mathrm{orb}(\omega_1)|}.
\end{equation}
Now consider $\widehat{g_0}(z)$, which is nonnegative for every $z\in\mathbb{Z}_q^n$:
\begin{align}
\label{g0hat}\widehat{g_0}(z)&=1+\sum_{i=1}^Mc_i\sum_{x\sim \omega_i}x^z\ge 0.
\end{align}
It is clear that $\widehat{g_0}(z)$ is invariant up to coordinate permutations, i.e. $\widehat{g_0}(z)=\widehat{g_0}(\tau(z))$ for every $\tau\in S_n$.

\medskip

Our aim is to further reduce the number of variables $c_i$, and we would like to do this by merging certain equivalence classes of $\sim$. We can do so, if we make a further averaging step over the scaled variants of $g_0$. Namely, introduce the function $g_1\colon \Omega_q^n\to \mathbb{R}$,
\[g_1(x):=\frac{1}{q}\sum_{\zeta\in\Omega_q}g_0(\zeta x).\]
With this definition, the function $g_1$ is now invariant up to coordinate permutations, and up to multiplying the vector $x$ by any scalar $\zeta\in\Omega_q$.
It is also clear, that $g_1(e)=1/q$, and $\sum_{x\in\Omega_q^n}g_1(x)=n$. Let us calculate its Fourier transform $\widehat{g}_1\colon \mathbb{Z}_q^n\to\mathbb{R}$,
\[\widehat{g}_1(z):=\sum_{x\in\Omega_q^n}g_1(x)x^z.\] We have the following simple property.
\begin{lemma}\label{L1}
The function $\widehat{g}_1\colon \mathbb{Z}_q^n\to\mathbb{R}$ satisfies
\[\widehat{g}_1(z)=\begin{cases}
\widehat{g_0}(z),& \text{when $\sum_{k=1}^n z_k\equiv\ 0\ (\mathrm{mod}\ q)$}\\
0,& \text{otherwise}.
\end{cases}\]
In particular, $\widehat{g}_1(z)\geq 0$ on $\mathbb{Z}_q^n$.
\end{lemma}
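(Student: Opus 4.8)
The plan is to compute $\widehat{g}_1(z)$ directly from its definition by unfolding the average over $\Omega_q$ and performing a change of variables. Starting from $\widehat{g}_1(z)=\sum_{x\in\Omega_q^n}g_1(x)x^z=\frac{1}{q}\sum_{\zeta\in\Omega_q}\sum_{x\in\Omega_q^n}g_0(\zeta x)x^z$, I would substitute $y:=\zeta x$; since multiplication by $\zeta$ is a bijection of $\Omega_q^n$, the inner sum over $x$ turns into a sum over $y$ for each fixed $\zeta$.

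The key observation is the multiplicativity of the pairing $x\mapsto x^z$ under scalar multiplication: writing $x=\zeta^{-1}y$ gives $x^z=\prod_{k=1}^n(\zeta^{-1}y_k)^{z_k}=\zeta^{-\sum_{k}z_k}\cdot y^z$, where the exponent $\sum_k z_k$ may be read modulo $q$ since each $x_k^{z_k}$ depends only on $z_k$ mod $q$. Hence the factor $\zeta^{-\sum_k z_k}$ pulls out of the sum over $y$, and we obtain
\[
\widehat{g}_1(z)=\frac{1}{q}\left(\sum_{\zeta\in\Omega_q}\zeta^{-\sum_{k}z_k}\right)\widehat{g_0}(z).
\]
Finally I would invoke the standard character-orthogonality identity: $\sum_{\zeta\in\Omega_q}\zeta^m$ equals $q$ when $q\mid m$ and $0$ otherwise. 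Applying this with $m=-\sum_{k}z_k$ yields exactly the claimed case distinction, and the nonnegativity of $\widehat{g}_1$ then follows immediately from the nonnegativity of $\widehat{g_0}$ established in \eqref{g0hat}.

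There is no real obstacle here — the argument is a routine Fourier/character computation — so the only care needed is bookkeeping: ensuring the substitution $y=\zeta x$ is applied uniformly across all $n$ coordinates, and keeping track that the relevant congruence is $\sum_k z_k\equiv 0\ (\mathrm{mod}\ q)$ rather than an equality.
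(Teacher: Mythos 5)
Your proposal is correct and is essentially identical to the paper's own proof: both unfold the average over $\zeta\in\Omega_q$, use the identity $x^z=\zeta^{-\sum_k z_k}(\zeta x)^z$ together with the bijectivity of $x\mapsto\zeta x$ to factor out $\widehat{g_0}(z)$, and finish with the orthogonality of the character sum $\sum_{\zeta\in\Omega_q}\zeta^{-\sum_k z_k}$. No issues.
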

\begin{proof}
Let us use the notation $s:=-\sum_{k=1}^n z_k\in\mathbb{Z}_q$. We have
\begin{align*}
\widehat{g}_1(z)&=\sum_{x\in\Omega_q^n}g_1(x)x^z=\sum_{x\in\Omega_q^n}\frac{1}{q}\sum_{\zeta\in\Omega_q}g_0(\zeta x)x^z=\frac{1}{q}\sum_{\zeta\in\Omega_q}\sum_{x\in\Omega_q^n}g_0(\zeta x)x^z\\
&=\frac{1}{q}\sum_{\zeta\in\Omega_q}\zeta^{s}\sum_{x\in\Omega_q^n}g_0(\zeta x)(\zeta x)^z=\frac{1}{q}\sum_{\zeta\in\Omega_q}\zeta^{s}\widehat{g_0}(z)=\frac{1}{q}\cdot \widehat{g_0}(z)\cdot \sum_{k=1}^q(\zeta_q^{s})^k,
\end{align*}
and the claim follows.
\end{proof}

\medskip

We will make one final step of averaging, exploiting the automorphisms of the underlying group $\Z_q$. For any number $1\le r\le q-1$ which is co-prime to $q$, and any vector $x=[x_1, \dots, x_n]\in \Omega_q^n$ define $x^r=[x_1^r, \dots, x_n^r]$. Next, define 
\begin{equation}\label{group}
g(x)=\frac{1}{\varphi(q)}\sum_{(r,q)=1} g_1(x^r).
\end{equation}

We call two vectors $x, y\in\Omega_q^n$ \textit{permutation-scale-group-equivalent} and denote this by the symbol $x\cong y$, if and only if there is a permutation $\tau\in S_n$, a scalar $\zeta\in\Omega_q$, and a number $r$ co-prime to $q$ such that $(\zeta\cdot\tau(x))^r=y$. 

\medskip

Let us define the \textit{orbit} of a vector $x$ as $\mathrm{Orb}(x):=\{y\in\Omega_q^n\colon y\cong x\}$, and decompose the set $ORT$ into $K$ nonempty $\cong$-equivalence classes $ORT=\cup_{i=1}^K\mathrm{Orb}(\omega_i)$, where $\omega_i\in \mathrm{Orb}(\omega_i)$ is a representative for any $i\in\{1,\dots, K\}$. By the averaging above, it follows that the function $g$ is constant on these equivalence classes. As such, $g$ admits at most $K+2$ distinct values, namely:
\begin{equation}\label{ci}
g(x)=\begin{cases}
1/q,& \text{if } x\cong e\\
c_i,& \text{if } x\cong \omega_i, i\in\{1,\dots, K\}\\
0,& \text{otherwise}.
\end{cases}
\end{equation}

All of the important equations \eqref{lin}, \eqref{c1}, \eqref{g0hat} hold analogously. Namely, the values $c_i$ must satisfy the \textit{feasibility conditions}: $0\leq c_i\leq \frac{n-1}{|\mathrm{Orb}(\omega_i)|}$, together with the following linear constraint:
\begin{equation}\label{lin2}
\sum_{x\in \Omega_q^n}g(x)=1+\sum_{i=1}^K |\mathrm{Orb}(\omega_i)|c_i=n.
\end{equation}
If we have a single equivalence class, then:
\begin{equation}\label{c12}
c_1=\frac{n-1}{|\mathrm{Orb}(\omega_1)|}.
\end{equation}
Finally, $\widehat{g}(z)$ is nonnegative by Lemma~\ref{L1} and equation \eqref{group}, 
\begin{align}\label{g0hat2}
\widehat{g}(z)&=\sum_{x\in\mathrm{supp}(g)}g(x)x^z=1+\sum_{i=1}^K\sum_{x\cong \omega_i}g(x)x^z=1+\sum_{i=1}^Kc_i\sum_{x\cong \omega_i}x^z\ge 0.
\end{align}

Two vectors $z,y\in\Z_q^n$, such that $\sum_{i=1}^n z_i \equiv\sum_{i=1}^n y_i  \equiv 0 \ (\mathrm{mod} \ q)$, will be called equivalent, $y \simeq z$, if there is a permutation $\pi$ and a number $r$ co-prime to $q$ such that $z\equiv r \cdot \pi(y) \ (\mathrm{mod} \  q)$.  It is easy to see by Lemma~\ref{L1} and equation \eqref{group} that $\widehat{g}(z)=0$ whenever $\sum_{i=1}^n z_i \not\equiv 0 \ (\mathrm{mod} \ q)$, and the value of $\widehat{g}(z)$ is constant on $\simeq$-equivalence classes.  

\medskip

We summarize the results of the discussion above in the following necessary condition for the existence of a $BH(n,q)$ matrix. 

\begin{theorem}\label{main}
If a $BH(n,q)$ exists, then there exists a function $g: \Omega_q^n  \to \R$ such that $g\ge 0$, $g$ is normalized so that for any $a\in \Omega_q$ and $e_a=[a, a, \dots, a]$ we have $g(e_a)=1/q$, 
$g$ is supported on $ORT\bigcup \left (\cup_{a\in \Omega_q} e_a \right )$, $g$ is constant on $\cong$-equivalence classes of $\Omega_q^n$, $\sum_{x\in \Omega_q^n}g(x)=n$, and $\widehat{g}\ge 0$.    
\end{theorem}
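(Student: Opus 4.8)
The plan is to collect the properties that have already been established for the averaged function $g$ in the preceding discussion and verify that each bullet of the theorem statement is literally one of them. The proof is therefore essentially a bookkeeping exercise: nothing new needs to be discovered, we simply assemble equations \eqref{group}, \eqref{ci}, \eqref{lin2}, \eqref{g0hat2} and Lemma~\ref{L1} into a single statement. I would begin by assuming a $BH(n,q)$ matrix $H$ exists, recall the construction $g_H\to g_0\to g_1\to g$ built by the three successive averaging steps (over column permutations $\sigma\in S_n$, over scalars $\zeta\in\Omega_q$, and over the automorphisms $r$ of $\Z_q$), and observe that $g$ is a nonnegative real-valued function on $\Omega_q^n$ because it is an average of the manifestly nonnegative functions $g_{\sigma(H)}$.

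Next I would check the normalization and support claims. Nonnegativity of each $g_{\sigma(H)}$ gives $g\ge 0$. For the support: each $g_{\sigma(H)}$ is supported on $ORT\cup\{e\}$ (since the rows of $\sigma(H)$ are still orthogonal), hence so is $g_0$; applying the scalar-averaging step spreads the mass at $e$ over the vectors $e_a=[a,\dots,a]$ for $a\in\Omega_q$ and leaves $ORT$ invariant (because $\zeta x\in ORT$ whenever $x\in ORT$), and the automorphism step likewise preserves both $ORT$ and the set $\{e_a:a\in\Omega_q\}$; so $\mathrm{supp}(g)\subseteq ORT\cup(\cup_{a\in\Omega_q}e_a)$. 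For the normalization value: $g_H(e)=1$, averaging over $\sigma$ keeps $g_0(e)=1$, and the scalar-averaging distributes this equally so that $g_1(e_a)=1/q$ for every $a$; the automorphism step permutes the $e_a$ among themselves, so $g(e_a)=1/q$ for all $a\in\Omega_q$. The total-mass identity $\sum_{x}g(x)=n$ is inherited at each averaging step because each $g_{\sigma(H)}$ sums to $n$, and this is exactly \eqref{lin2}. Constancy of $g$ on $\cong$-equivalence classes is the defining purpose of the three averages and is recorded in \eqref{ci}.

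The last property, $\widehat g\ge 0$, is the one point that is not completely immediate, but it too has already been done: $\widehat{g_H}(z)=\left|\sum_i h_i^z\right|^2\ge 0$ by \eqref{ghhat}, this survives the $\sigma$-average since the Fourier transform is linear and $\widehat{g_0}=\frac{1}{n!}\sum_\sigma\widehat{g_{\sigma(H)}}\ge 0$, Lemma~\ref{L1} shows $\widehat{g_1}$ is either $\widehat{g_0}(z)$ or $0$ and hence still nonnegative, and averaging over $r$ co-prime to $q$ preserves nonnegativity by \eqref{group} (the map $x\mapsto x^r$ induces a permutation of $\Z_q^n$ at the Fourier side, so $\widehat g(z)$ is an average of values $\widehat{g_1}(r'z)$ for various $r'$). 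This is precisely \eqref{g0hat2}. Assembling these observations completes the proof.

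I do not anticipate a genuine obstacle here: the theorem is a summary, and the only mildly delicate points are (i) correctly tracking how the mass at $e$ is redistributed onto the $e_a$ by the scalar average — one must note that $\zeta e$ ranges over exactly the $q$ vectors $e_a$ — and (ii) making sure that the automorphism averaging step, which was described at the level of $g$ but whose Fourier-side nonnegativity was only asserted, really does preserve $\widehat g\ge 0$; I would spell out that $\widehat{g_1(\cdot^r)}(z)=\widehat{g_1}(r^{-1}z)$ (indices mod $q$) so that $\widehat g(z)=\frac{1}{\varphi(q)}\sum_{(r,q)=1}\widehat{g_1}(r^{-1}z)\ge 0$. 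With those two bookkeeping points handled, the statement follows by directly quoting the numbered equations above.
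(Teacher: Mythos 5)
Your proposal is correct and follows essentially the same route as the paper: the theorem is explicitly a summary of the preceding construction $g_H\to g_0\to g_1\to g$, and your assembly of \eqref{ghhat}, Lemma~\ref{L1}, \eqref{group}, \eqref{ci}, \eqref{lin2} and \eqref{g0hat2} matches the paper's (implicit) argument. Your two added bookkeeping points --- that $\zeta e$ ranges exactly over the $e_a$, and that $\widehat{g}(z)=\frac{1}{\varphi(q)}\sum_{(r,q)=1}\widehat{g_1}(r^{-1}z)\ge 0$ --- are correct details the paper leaves unstated.
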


This theorem gives us a tool to detect the \textit{nonexistence} of Butson Hadamard matrices $BH(n,q)$ for certain values of $(n,q)$. 
In order to show that $BH(n,q)$ matrices cannot exist, we can employ one of the following two strategies:
\begin{itemize}
\item exhibit a single $z_0\in\mathbb{Z}_q^n$, such that $\widehat{g}(z_0)<0$ for every feasible choice of $c_i$; or
\item use linear programming to show that the feasibility conditions are incompatible. In the linear program, the variables are $c_i\ge 0$ as in \eqref{ci}, the constraints are given by $\widehat{g}(z)\ge 0$ for all $z\in\Z_q^n$, and the target is to maximize the value of $1+\sum_{i=1}^K |\mathrm{Orb}(\omega_i)|c_i$ as in \eqref{lin2}. If this maximum is strictly less than $n$, then we get a contradiction with \eqref{lin2}, and conclude that $BH(n,q)$ matrices cannot exist.
\end{itemize}

While the second strategy is more general, the first is technically much easier to employ if such a vector $z_0$ exists. This is the case in all the examples below. 

\medskip

The following result appears to be new. 

\begin{theorem}[cf.~\mbox{\cite[Proposition~2.1]{M}}]
Let $p\equiv 1\ (\mathrm{mod}\ 4)$ be a prime number. There does not exist $BH(p+1,2p)$ matrices.
\end{theorem}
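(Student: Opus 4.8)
The plan is to apply Theorem~\ref{main} through the first of the two nonexistence strategies, namely to exhibit a single $z_0\in\Z_{2p}^{p+1}$ with $\widehat{g}(z_0)<0$. I would take $n=p+1$, $q=2p$, and $z_0:=[p,p,\dots,p]$. Since $p+1$ is even, $\sum_{k=1}^{p+1}p=p(p+1)\equiv 0\ (\mathrm{mod}\ 2p)$, so $\widehat{g}(z_0)$ is not automatically zero and the proof amounts to evaluating it. The point is that for any $x\in\Omega_{2p}^{p+1}$ one has $x^{z_0}=\prod_{k=1}^{p+1}x_k^{p}$, where each $x_k^{p}$ lies in $\{+1,-1\}$ because $(x_k^{p})^2=x_k^{2p}=1$; more precisely $\Omega_{2p}=\Omega_p\,\sqcup\,(-\Omega_p)$ (a disjoint union, since $-1\notin\Omega_p$ for $p$ odd), with $x_k^{p}=+1$ exactly when $x_k\in\Omega_p$ and $x_k^{p}=-1$ exactly when $x_k\in-\Omega_p$. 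Consequently $e_a^{z_0}=(a^{p})^{p+1}=a^{p(p+1)}=1$ for every $a\in\Omega_{2p}$.

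The crux is to show $x^{z_0}=-1$ for every $x\in ORT$. Given such an $x$, I would split the index set $\{1,\dots,p+1\}=A\cup B$ according to whether $x_k\in\Omega_p$ or $x_k\in-\Omega_p$; writing $x_k=-y_k$ with $y_k\in\Omega_p$ for $k\in B$, the orthogonality relation $\sum_k x_k=0$ rearranges to $\sum_{k\in A}x_k=\sum_{k\in B}y_k$ as an identity in $\Z[\zeta_p]$. Applying the evaluation map $\psi\colon\Z[\zeta_p]\to\mathbb{F}_p$ with $\psi(\zeta_p)=1$ (well defined because the relation $1+\zeta_p+\dots+\zeta_p^{p-1}=0$ is sent to $p\equiv 0$), which carries any sum of $m$ $p$-th roots of unity to $m\bmod p$, yields $|A|\equiv|B|\ (\mathrm{mod}\ p)$. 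Combined with $|A|+|B|=p+1$, this gives $2|A|\equiv 1\ (\mathrm{mod}\ p)$, hence $|A|\equiv(p+1)/2\ (\mathrm{mod}\ p)$, and since $0\le|A|\le p+1$ the only possibility is $|A|=|B|=(p+1)/2$. Therefore $x^{z_0}=(+1)^{|A|}(-1)^{|B|}=(-1)^{(p+1)/2}=-1$, the last equality being exactly where the hypothesis $p\equiv 1\ (\mathrm{mod}\ 4)$ enters, as it makes $(p+1)/2$ odd. (Alternatively, one could cite the classical description of vanishing sums of $2p$-th roots of unity as nonnegative integer combinations of antipodal pairs and full $\Omega_p$-cosets: a length-$(p+1)$ vanishing sum admits no $\Omega_p$-coset, so it is $(p+1)/2$ antipodal pairs, and each pair contributes one $+1$ and one $-1$ to $\prod_k x_k^{p}$.)

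It then remains to assemble the pieces. By Theorem~\ref{main}, $g$ is supported on $ORT\cup\{e_a:a\in\Omega_{2p}\}$ (a disjoint union, since $e_a$ has coordinate sum $(p+1)a\ne 0$), satisfies $g(e_a)=1/(2p)$, and has total mass $\sum_{x}g(x)=p+1$; hence
\[
\widehat{g}(z_0)=\sum_{a\in\Omega_{2p}}g(e_a)\cdot 1+\sum_{x\in ORT}g(x)\cdot(-1)=1-\big((p+1)-1\big)=1-p .
\]
Since $p\ge 5$, this is negative, contradicting $\widehat{g}\ge 0$; therefore $BH(p+1,2p)$ matrices do not exist. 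Everything here is bookkeeping apart from the evaluation $x^{z_0}=-1$ on $ORT$, and I expect that sign computation — in particular the parity step relying on $p\equiv 1\ (\mathrm{mod}\ 4)$ — to be the part a reader will want to see spelled out in full.
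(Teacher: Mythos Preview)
Your proof is correct and follows essentially the same approach as the paper: the same choice $z_0=[p,\dots,p]$, the same key claim that $x^{z_0}=-1$ for every $x\in ORT$, and the identical computation $\widehat{g}(z_0)=1-p<0$. The only difference is in justifying the sign on $ORT$: the paper invokes directly that every vanishing sum of $p+1$ many $2p$-th roots of unity consists of $(p+1)/2$ antipodal pairs (your parenthetical alternative), whereas your primary argument via the reduction $\Z[\zeta_p]\to\mathbb{F}_p$ is a pleasant self-contained substitute that avoids citing the Lam--Leung structure theorem.
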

\begin{proof}
Assume by contradiction that a $BH(p+1, 2p)$ matrix exists, and consider the corresponding function $g$ in Theorem \ref{main}, and the $\cong$ equivalence classes of $\Omega_{2p}^{p+1}$.

Write $p+1=4L+2$. In this case, there are $K$ distinct $\cong$ equivalence classes of $ORT$, each with a representative of the form $\omega=[\omega_1,-\omega_1,\omega_2,-\omega_2,\dots,\omega_{2L+1},-\omega_{2L+1}]$, where $\omega_j\in \Omega_{2p}$. Therefore, with $z_0=[p,p,\dots, p]$ we have $\omega^{z_0}=(-1)^{p(2L+1)}\prod_{k=1}^{2L+1}\omega_k^{2p}=-1$. In particular, by equations  \eqref{g0hat2} and \eqref{lin2}  we have:
\[\widehat{g}(z_0)=1-\sum_{i=1}^Kc_i|\mathrm{Orb}(\omega_i)|=1-(p+1-1)=1-p<0,\]
a contradiction.
\end{proof}

We can also give a new proof of a special case of a result given in \cite[Theorem~7.9]{BBS}.
\begin{theorem}[Banica et al.~\cite{BBS}]
There exists no $BH(p+2,2p)$ matrix for any prime $p>2$.
\end{theorem}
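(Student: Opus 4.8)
The plan is to argue as in the two preceding proofs: assume a $BH(p+2,2p)$ exists, pass to the function $g$ provided by Theorem~\ref{main}, and exhibit a single vector $z_0\in\Z_{2p}^{p+2}$ with $\widehat g(z_0)<0$. Put $n:=p+2$; since $p$ is an odd prime, $n$ is odd and coprime to $2p$. The constant vector $z_0=[p,\dots,p]$ is of no use here, because its coordinate sum $(p+2)p$ is not $\equiv 0\pmod{2p}$ (as $p$ is odd), while $\widehat g$ vanishes on every $z$ with $\sum_k z_k\not\equiv 0\pmod{2p}$. I would instead use the perturbation $z_0:=[p,p,\dots,p,0]$, with $n-1$ coordinates equal to $p$ and the last one equal to $0$.

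First I would show that $ORT$ forms a single $\cong$-equivalence class. Writing each coordinate of a vector $x\in ORT$ uniquely as $x_i=\varepsilon_i\zeta_p^{a_i}$ with $\varepsilon_i\in\{1,-1\}$ and $a_i\in\{0,\dots,p-1\}$ (every $2p$-th root of unity is $\pm$ a unique $p$-th root of unity), the relation $\sum_i x_i=0$ becomes $\sum_{a=0}^{p-1}c_a\zeta_p^a=0$ with $c_a:=\sum_{i\colon a_i=a}\varepsilon_i\in\Z$; since $1+t+\dots+t^{p-1}$ is the minimal polynomial of $\zeta_p$, all the $c_a$ equal a common value $c\in\Z$. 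A count of coordinates excludes $c=0$ (then $n=\sum_a m_a$ with $m_a:=|\{i\colon a_i=a\}|$ would be even) and $|c|\ge 2$ (then $n=\sum_a m_a\ge p|c|\ge 2p>n$), so $c=\pm1$; a short inspection of the $m_a$ then shows that the entry multiset of $x$ is a complete set of $p$-th roots of unity together with one pair $\{\gamma,-\gamma\}$, up to a common scalar. Scaling normalises this to the multiset of $\omega_1:=[1,\zeta_p,\zeta_p^2,\dots,\zeta_p^{p-1},1,-1]$, and since $\omega_1\in ORT$ it follows that $K=1$ and $c_1=(n-1)/|\mathrm{Orb}(\omega_1)|$ by \eqref{c12}.

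Next I would compute $\widehat g(z_0)$. One checks $\sum_k(z_0)_k=(n-1)p=(p+1)p\equiv 0\pmod{2p}$ (the oddness of $p$ gives $p^2\equiv p$), so $\widehat g(z_0)=1+c_1\sum_{x\cong\omega_1}x^{z_0}$ by \eqref{g0hat2}. Raising a vector to a power coprime to $2p$ merely permutes the coordinates of $\omega_1$, so $\mathrm{Orb}(\omega_1)$ is the permutation--scaling orbit; since $\sum_k(z_0)_k\equiv 0\pmod{2p}$, every scalar $\zeta$ contributes $\zeta^{\sum_k(z_0)_k}=1$, the scaling averages out, and
\[\frac{1}{|\mathrm{Orb}(\omega_1)|}\sum_{x\cong\omega_1}x^{z_0}=\frac{1}{n!}\sum_{\tau\in S_n}\tau(\omega_1)^{z_0}.\]
For each $\tau\in S_n$,
\[\tau(\omega_1)^{z_0}=\Bigl(\prod_{k=1}^{n-1}(\tau\omega_1)_k\Bigr)^{p}=\Bigl(\prod_{k=1}^{n}(\omega_1)_k\Big/(\tau\omega_1)_n\Bigr)^{p}=\bigl(-1/(\tau\omega_1)_n\bigr)^{p},\]
using $\prod_{k=1}^{n}(\omega_1)_k=-1$; this value is $-1$ when the coordinate landing in position $n$ is a $p$-th root of unity and $+1$ when it is $-1$. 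The latter happens for exactly $(n-1)!$ of the $n!$ permutations, so the average above is $\frac1n\bigl(-(n-1)+1\bigr)=-\tfrac{n-2}{n}$. Since $c_1|\mathrm{Orb}(\omega_1)|=n-1$,
\[\widehat g(z_0)=1-(n-1)\cdot\frac{n-2}{n}=1-\frac{(p+1)p}{p+2}=\frac{2-p^2}{p+2}<0\]
for every prime $p>2$, contradicting $\widehat g\ge 0$. Hence $BH(p+2,2p)$ cannot exist.

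The step I expect to be the main obstacle is the classification of $ORT$ as a single $\cong$-class: this is where the arithmetic of $\mathbb Q(\zeta_p)$ (equivalently, the structure of vanishing sums of $2p$-th roots of unity) enters, and where the hypotheses that $p$ is prime and $p>2$ are actually used. Everything afterwards is routine bookkeeping, and because $K=1$ the coefficient $c_1$ is forced by the normalisation in Theorem~\ref{main}, so neither orbit-size computations nor linear programming are required.
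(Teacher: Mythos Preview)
Your proof is correct and follows the same overall strategy as the paper: establish that $ORT$ is a single $\cong$-class (the paper merely asserts this; you actually supply the vanishing-sum argument), then evaluate $\widehat g$ at the test vector $[0,p,\dots,p]$ (which is a coordinate permutation of your $[p,\dots,p,0]$) to obtain $1-\dfrac{p(p+1)}{p+2}=\dfrac{2-p^2}{p+2}<0$.

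The only real difference is in the bookkeeping of the orbit sum. The paper first computes $|ORT|=\binom{p+2}{2}\,p!\,2p$ and hence $c_1$, rewrites $x^{z_0}=x_1^{p}\prod_{k}x_k^{p}$, splits $ORT=ORT^+\cup ORT^-$ according to the sign of $\prod_k x_k^{p}$, and then does a separate count inside each half. Your orbit-averaging identity
\[
\frac{1}{|\mathrm{Orb}(\omega_1)|}\sum_{x\cong\omega_1}x^{z_0}
=\frac{1}{n!}\sum_{\tau\in S_n}\tau(\omega_1)^{z_0}
\]
sidesteps all of this: neither the explicit orbit size nor the $ORT^\pm$ decomposition is needed, and the answer drops out from the single observation that $\tau(\omega_1)^{z_0}=+1$ exactly when the entry $-1$ lands in the last coordinate. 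Your route is shorter and cleaner; the paper's makes the combinatorics of $ORT$ more visible.
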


\begin{proof}
It is easy to see that the set $ORT$ has a single $\cong$ equivalence class, represented by $\omega_1:=[1,\zeta_{2p}^2,\dots,\zeta_{2p}^{2p-2},1,-1]$. The size of $ORT$ is $\binom{p+2}{2}\cdot p!\cdot 2p$. Therefore, using \eqref{c12} we have 
\[c_1=\frac{p+1}{|ORT|}=\frac{p+1}{\binom{p+2}{2}\cdot p!\cdot 2p}.\]

Let $z_0:=[0,p,\dots,p]$, and substitute into \eqref{g0hat2} to obtain
\[\widehat{g}(z_0)=1+\frac{p+1}{\binom{p+2}{2}\cdot p!\cdot 2p}\sum_{x\cong \omega_1}x_1^p\prod_{k=1}^{p+2}x_k^{p},\]
where we have used the identity $x_1^0=x_1^p\cdot x_1^p$. 

Notice that the value of the appearing product is always $\pm 1$. Let us write $ORT=ORT^+\cup ORT^{-}$, and define $x\in ORT^+$ if $\prod_{k=1}^{p+2}x_k^p=1$, and $x\in ORT^-$ if this product is $-1$. Clearly $x\in ORT^+$ if and only if $\zeta_{2p}x\in ORT^{-}$. Consequently
\begin{equation}\label{ortpsize}
|ORT^+|=|ORT^-|=\binom{p+2}{2}\cdot p!\cdot p.
\end{equation}

We have:
\[\widehat{g}(z_0)=1+\frac{p+1}{\binom{p+2}{2}\cdot p!\cdot 2p}\left(\sum_{x\in ORT^+}x_1^p-\sum_{x\in ORT^-}x_1^p\right).\]
All elements of $ORT$ have the form $x=[a,a\zeta_{2p}^2,\dots,a\zeta_{2p}^{2p-2},b,-b]$, after a suitable permutation. The product of the entries of $x$ is $-a^{p}b^2\zeta_{2p}^{p(p-1)}=-a^pb^2$. Therefore the product of the $p$th powers is $-a^{p^2}=-a^{p(p-1)}a^p=-a^p$.

\medskip

Therefore, we have $\prod_{k=1}^{p+2}x_k^p=-1$ if and only if $a^{p}=1$, i.e., $a$ is an even $2p$th root of unity. This means that there is exactly one odd $2p$th root in $x$ (i.e.\ an entry $\mathrm{exp}(\pi\mathbf{i}t/p)$ where $t$ is odd). How often is this entry the very first coordinate? By easy combinatorial counting, the number of such vectors is $p!\cdot \binom{p+1}{2}$. Indeed, we have $p$ choices for the leading odd $2p$th root, say $\alpha$, which determines the value of the duplicate entry ($-\alpha$), two of which can be placed to any of the remaining $p+1$ places. The order of the remaining uniquely determined entries can be chosen in $(p-1)!$ ways. 

As such, within the set $ORT^-$ we have $x_1^p=-1$  exactly $p!\cdot \binom{p+1}{2}$ many times, and will be $+1$ in all the remaining cases. Thus, using \eqref{ortpsize}:
\[\sum_{x\in ORT^-}x_1^p=-p!\cdot \binom{p+1}{2}+\left(|ORT^-|-p!\cdot \binom{p+1}{2}\right)=\frac{(p+1)!\cdot p^2}{2}.\]
This determines the sum for the $ORT^+$ part, as well: 
\[\sum_{x\in ORT^+}x_1^p=-\sum_{x\in ORT^+}(\zeta_{2p} x_1)^p=-\sum_{y\in ORT^-} y_1^p=-\frac{(p+1)!\cdot p^2}{2}.\]
Altogether, we have:
\[\widehat{g}(z_0)=1-\frac{p+1}{\binom{p+2}{2}\cdot p!\cdot 2p}\cdot \frac{2(p+1)!\cdot p^2}{2}=1-\frac{p(p+1)}{p+2}=2-p-\frac{2}{p+2}<0,\]
a contradiction.
\end{proof}

Finally, we give a new proof of the nonexistence of $BH(5,12)$ matrices. The result itself is not new, because it follows from Haagerup's  complete classification of complex Hadamard matrices of order 5 in \cite{H}.   

\begin{theorem}[cf. ~\cite{H}]
\label{thm5}
There exist no $BH(5,12)$ matrices.
\end{theorem}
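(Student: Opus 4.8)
Assume a $BH(5,12)$ matrix exists and let $g$ be the function furnished by Theorem~\ref{main}; set $\zeta_3:=\zeta_{12}^4$. The first task is to describe $ORT$. By the classical theory of vanishing sums of roots of unity, a vanishing sum of $5$ elements of $\Omega_{12}$ cannot itself be minimal — a minimal vanishing sum of prime weight $p$ is a rotate of $1+\zeta_p+\dots+\zeta_p^{p-1}$, and $\Omega_{12}$ has no primitive fifth root of unity — so it decomposes into an antipodal pair together with a coset of $\langle\zeta_3\rangle$. Hence every $x\in ORT$ is, after a coordinate permutation, $[a,-a,b,b\zeta_3,b\zeta_3^2]$ for some $a,b\in\Omega_{12}$, and the antipodal pair $\{a,-a\}$ is uniquely determined by $x$.

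Next I would determine the $\cong$-classes. With $a=\zeta_{12}^i$ and $b=\zeta_{12}^j$, the parity of $i-j$ is a $\cong$-invariant (scaling shifts $i,j$ equally, and the power maps multiply them by an odd number), and $i\equiv j\pmod 2$ exactly when the multiset of entries has a repeat. A short check shows these are the only two classes, with representatives $\omega_1=[1,1,-1,\zeta_3,\zeta_3^2]$ and $\omega_2=[\zeta_{12},-\zeta_{12},1,\zeta_3,\zeta_3^2]$. Counting the admissible (pair, triple) combinations and then the orderings — equivalently, the stabilizers inside $S_5\times\Omega_{12}\times(\mathbb{Z}/12)^\times$ — gives $|\mathrm{Orb}(\omega_1)|=720$ and $|\mathrm{Orb}(\omega_2)|=1440$, so the feasibility constraint \eqref{lin2} becomes $720\,c_1+1440\,c_2=4$.

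The heart of the proof is to exhibit one $z_0$ with $\widehat g(z_0)<0$. I would take $z_0=[1,1,1,10,11]\in\mathbb{Z}_{12}^5$, noting $\sum_k (z_0)_k=24\equiv 0\pmod{12}$ so that $\widehat g(z_0)$ is not forced to vanish. The computation becomes manageable because $\sum_k (z_0)_k\equiv 0\pmod{12}$ implies $(\zeta x)^{z_0}=x^{z_0}$ for every $\zeta\in\Omega_{12}$, so $x\mapsto x^{z_0}$ is constant along the scaling direction of each orbit; consequently $\sum_{x\cong\omega_i}x^{z_0}$ collapses to $12$ times the sum of $y^{z_0}$ over the distinct coordinate-orderings of $\omega_i$ alone ($60$ orderings for $\omega_1$, $120$ for $\omega_2$, the latter using that the sum of the primitive $12$th roots of unity is $\mu(12)=0$). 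Carrying these out should give $\sum_{x\cong\omega_1}x^{z_0}=-216$ and $\sum_{x\cong\omega_2}x^{z_0}=-432$, whence by \eqref{g0hat2}
\[\widehat g(z_0)=1-216\,c_1-432\,c_2=1-\tfrac{3}{10}\bigl(720\,c_1+1440\,c_2\bigr)=1-\tfrac{3}{10}\cdot 4=-\tfrac15<0\]
for every feasible $(c_1,c_2)$, contradicting $\widehat g\ge 0$.

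I expect the main obstacle to be the choice of $z_0$. The ``obvious'' candidates with all entries in $\{0,3,6,9\}$, for which each $x^{z_0}$ is a fourth root of unity, do not close the argument: for instance $z_0=[3,3,3,3,0]$ yields $\sum_{x\cong\omega_1}x^{z_0}=-432$ but $\sum_{x\cong\omega_2}x^{z_0}=+864$, so $\widehat g$ is positive at one end of the feasibility segment. One genuinely needs a $z_0$ involving primitive $12$th roots (as above); after that, the remaining work — the two permutation-sum evaluations and the stabilizer count — is laborious but routine.
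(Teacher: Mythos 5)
Your proposal is correct and follows essentially the same route as the paper: the same two $\cong$-classes with orbit sizes $720$ and $1440$, the same constraint $720c_1+1440c_2=4$, the same witness $z_0=[1,1,1,10,11]$, and the same orbit sums $-216$ and $-432$ yielding $\widehat g(z_0)=-1/5<0$. The only difference is that the paper obtains these two sums by direct computer evaluation, whereas you organize them by hand via the scaling-collapse observation (which indeed checks out), so nothing further is needed.
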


\begin{proof}
Let $n=5$, and $q=12$, and assume by contradiction that a $BH(5,12)$ matrix exists. It is easy to see that the number of $\cong$ equivalence classes of $ORT$ is $K=2$, represented by
$\omega_1:=[1,\zeta_{12}^4,\zeta_{12}^8,1,-1]$ and $\omega_2:=[1,\zeta_{12}^4,\zeta_{12}^8,\zeta_{12},-\zeta_{12}]$, with orbit sizes $720$ and $1440$, respectively. We have
\[g(x)=\begin{cases}
1/12,& \text{if } x\cong [1,1,1,1,1]\\
c_1,& \text{if } x\cong \omega_1\\
c_2,& \text{if } x\cong \omega_2\\
0,& \text{otherwise}.
\end{cases}\]
By the linear constraint \eqref{lin2}, we have $1+\sum_{i=1}^2 |\mathrm{Orb}(\omega_i)|c_i=5$, thus
\begin{equation}\label{lin4}
720c_1+1440c_2=4.
\end{equation}
Using \eqref{g0hat2} we have
\begin{equation}\label{512}
    \widehat{g}(z)=1+c_1\sum_{x\cong \omega_1}x^z+c_2\sum_{x\cong \omega_2}x^z.
\end{equation}
Choose $z_0=[1,1,1,10,11]\in\mathbb{Z}_{12}^5$. By direct evaluation (by computer) we have $\sum_{x\cong \omega_1}x^{z_0}=-216$, while $\sum_{x\cong \omega_2}x^{z_0}=-432.$ Therefore, regardless of the values of $c_1, c_2$, we conclude by 
\eqref{lin4} and \eqref{512} that 
\[\widehat{g}(z_0)=-1/5,\]
a contradiction.
\end{proof}

It is a natural question whether the converse of Theorem \ref{main} is true, i.e. whether the existence of a function $g$ with the prescribed properties implies the existence of a $BH(n,q)$ matrix. The following example shows, unfortunately, that this is not the case in general.
\begin{example}
For $(n,q)=(15, 3)$ a function $g$ with all the properties prescribed in Theorem \ref{main} exists, while a Butson Hadamard matrix $BH(15, 3)$ does not. 

\medskip

Indeed, there is a single $\cong$ equivalence class in $ORT$, represented by the element $\omega_1=[1,1,1,1,1,1,\zeta_3,\zeta_3,\zeta_3,\zeta_3,\zeta_3,\zeta_3^2,\zeta_3^2,\zeta_3^2,\zeta_3^2,\zeta_3^2]$. The size of $ORT$ is $|\mathrm{Orb}(\omega_1)|=\frac{15!}{5!\cdot 5!\cdot 5!}=756756$. We have $c_1=\frac{n-1}{|\mathrm{Orb}(\omega_1)|}=\frac{1}{54054}$, and
\[\widehat{g}(z)=1+\frac{1}{54054}\sum_{x\cong \omega_1}\prod_{k=1}^{15}x_k^{z_k}.\]

This can directly be evaluated at all $z\in \Z_3^{15}$ (keeping in mind that it is sufficient to evaluate one representative in each $\simeq$ equivalence class of $\Z_3^{15}$). The results show that $\widehat{g}(z)\ge 0$ for all choices of $z$. 

On the other hand, it is known that $BH(15, 3)$ matrices do not exist \cite[Example~2]{W}. 
\end{example}

The situation is completely similar in the case $(n,q)=(8,15)$: a function $g$ with all the properties prescribed in Theorem \ref{main} exists, while a Butson Hadamard matrix $BH(8,15)$ does not \cite[Theorem~4.11]{LOS}. 

\medskip

Finally, it is worth mentioning here that if for some values of $(n,q)$ there exists a function $g=g_{n,q}$ with all the properties prescribed in Theorem \ref{main}, then such a function also exists for any multiple of $q$, i.e.\ the method cannot prove the nonexistence of Butson Hadamard matrices $BH(n, kq)$ for any $k\ge 1$. For example, appropriate functions exist for the values $(15, 6), (15,9), (15, 12)$, or $(8,30), (8,45)$, etc. The reason is that we can consider the subgroup $\Omega_q^n\le \Omega_{kq}^n$ composed of vectors of length $n$ with each entry being a $q$th root of unity, and simply copy the function $g_{n,q}$ onto this subgroup. 

\medskip

It would be very interesting to understand what other properties of the function $g$ can guarantee the existence of a matrix $BH(n,q)$. 

\section{Construction of a matrix $BH(18, 14)$}\label{sec3}

In this section we use the necessary conditions given in Theorem \ref{main} to facilitate the search for a $BH(18, 14)$ matrix. The existence of such a matrix has been open so far (see \cite[Table~7]{LOS}).  

\medskip

Assume that a $BH(18,14)$ matrix $H$ exists, and consider the corresponding function $g_H$ defined 
in equation \eqref{gh}. Let $z_0=[7, 7, \dots, 7]$, and evaluate $\widehat{g}_H(z_0)$. By \eqref{ghhat} we have 
$$\widehat{g_H}(z_0)=\sum_{i=1}^n\sum_{j=1}^n(h_i\circ\overline{h}_j)^{z_0}
=\left|\sum_{i=1}^nh_i^{z_0}\right|^2\geq 0,$$
and all vectors $h_i\circ \overline{h}_j$ are elements of $ORT\cup\{e\}$. 

\medskip

For any vector $v\in \Z_{14}^{18}$ we have $v^{z_0}=\pm 1$, so the double sum consists of terms $\pm 1$ only. For terms $i=j$ we trivially have $(h_i\circ \overline{h}_j)^{z_0}=1$. Let $\zeta=\mathrm{exp}(2\pi\mathbf{i}/14)$. It is easy to see that the only elements $v$ of $ORT$ where $v^{z_0}=+1$ occurs are the vectors of the form
\[[1,-1,1,\zeta^2,\zeta^4,\zeta^6,\zeta^8,\zeta^{10},\zeta^{12},\zeta^y, -\zeta^{y},1,\zeta^2,\zeta^4,\zeta^6,\zeta^8,\zeta^{10},\zeta^{12}]\]
for some $0\le y\le 6$. Let us call the collection of these vectors $ORT^+$. The double sum being nonnegative, we conclude that $h_i\circ \overline{h}_j\in ORT^+$ or $i=j$ must account for at least half of the cases. This means that there must exist an $i$ such that $h_i\circ h_j\in ORT^+$ occurs for at least 8 different values of $j\ne i$. After permuting rows and multiplying the columns of the matrix $H$ by appropriate powers of $\zeta$, we can assume that $i=1$, and $h_1=[1,1,\dots, 1]$. 

\medskip

At this point it will be convenient to change notation slightly, and encode the rows of $H$ with the appearing exponents of $\zeta$. For instance, $h_1=[1,1, \dots, 1]$ will be encoded as $h_1'=[0,0,0,0,0,0,0,0,0,0,0,0,0,0,0,0,0,0]$, and the elements of $ORT^+$ will be encoded as \mbox{$[0,7,0,2,4,6,8,10,12,y,y+7,0,2,4,6,8,10,12]$}.
Based on examples of $BH(14, 10)$ matrices, we make a concrete choice, and  assume that the first two rows of $H$ are
$h_1'$ and $h_2'=[0,7,0,2,4,6,8,10,12,0,7,0,2,4,6,8,10,12]$, and the next seven rows are  permutations of $[0,7,0,2,4,6,8,10,12,y,y+7,0,2,4,6,8,10,12]$ for possibly different values of $y$. Also, looking at examples of $BH(14, 10)$ matrices, we made the further assumption that in rows $h_3', h_4', \dots, h_9'$ the values of $y$ are all distinct and range from $0$ to $6$ in some order, and the permutation of coordinates in each row permutes the left and right halves of the vector separately.

\medskip

As such, we assumed that the left half of each vector $h_3', \dots, h_9'$ is a permutation of \mbox{$[0,7,0,2,4,6,8,10,12]$}. Let us denote these half vectors by $h'_{3, L}, \dots, h'_{9,L}$. There was one more observation that facilitated the search. In the case of $BH(14, 10)$ matrices, the absolute value of the inner products of $|\langle h_{2,L}, h_{3,L}\rangle|$ and $|\langle h_{2,L}, h_{4,L}\rangle|$ was $2$ in some cases with $y=1$ and $y=\frac{q}{2}-1$ for $h'_3$ and $h'_4$, respectively. Therefore, in the search for $BH(18,14)$ we also assumed that $y=1$ for $h'_3$, and $y=6$ for $h'_4$, and that the corresponding inner products have absolute value $2$. Together with the orthogonality of the rows $h_1, h_2, h_3, h_4$ these assumptions restricted the search in a sufficient way, and only $134504$ triplets $(h_1,h_2,h_3)$ and $2421064$ quadruples of rows $(h_1, h_2, h_3, h_4)$ satisfied all of these constraints. 

\medskip

After that, adding further orthogonal rows $h_5, \dots, h_9$ with corresponding values ${y=0,2,3,4,5}$ the number of cases found was as follows: $401952$, $4304$, $6128$, $8976$, $8912$. As such, we ended up with $8912$ cases of orthogonal rows $(h_1, h_2, \dots, h_9)$. To complete the construction, we assumed the simplifying condition that $h'_{10}=[0,0,0,0,0,0,0,0,0,7,7,7,7,7,7,7,7,7]$. Indeed, by the algebraic structure $h_{10}$ is automatically orthogonal to the preceding rows. This implies that the left and right half of each of the remaining vectors $h'_{11},\dots,h'_{18}$ must be a permutation of $[a,a+7,b,b+2,b+4,b+6,b+8,b+10,b+12]$ for some $0\leq a\leq 6$, $0\leq b\leq 1$. In the end, each of the $8912$ cases could be completed to a Butson Hadamard matrix $BH(18,14)$ by adding further orthogonal rows $h_{11}, \dots, h_{18}$. Some of these matrices may be equivalent to each other, and we do not claim that they constitute a complete list of $BH(18,14)$ matrices. Nevertheless, studying these matrices may well lead to the resolution of Conjecture~\ref{conj1} in the future.

\medskip

Executing this search took about $4$ hours on a compute cluster with $192$ CPU cores.

\begin{example}
We present here one example of $BH(18, 14)$ matrices we discovered in logarithmic form, i.e., the matrix entries denote the exponents of $\zeta_{14}$. The horizontal and vertical lines partition the matrix highlighting four $7\times 7$ blocks.

\[H=\left[
\begin{array}{cc|ccccccc|cc|ccccccc}
 0 & 0 & 0 & 0 & 0 & 0 & 0 & 0 & 0 & 0 & 0 & 0 & 0 & 0 & 0 & 0 & 0 & 0 \\
 0 & 7 & 0 & 2 & 4 & 6 & 8 & 10 & 12 & 0 & 7 & 0 & 2 & 4 & 6 & 8 & 10 & 12 \\
\hline
 0 & 7 & 0 & 8 & 12 & 4 & 10 & 6 & 2 & 8 & 8 & 12 & 6 & 10 & 2 & 1 & 4 & 0 \\
 0 & 7 & 6 & 10 & 2 & 8 & 4 & 0 & 12 & 6 & 6 & 4 & 8 & 0 & 13 & 2 & 12 & 10 \\
 0 & 7 & 2 & 12 & 8 & 6 & 0 & 4 & 10 & 0 & 0 & 7 & 10 & 6 & 4 & 12 & 2 & 8 \\
 0 & 7 & 12 & 4 & 0 & 10 & 8 & 2 & 6 & 2 & 2 & 10 & 9 & 12 & 8 & 6 & 0 & 4 \\
 0 & 7 & 4 & 2 & 10 & 0 & 6 & 12 & 8 & 10 & 10 & 2 & 0 & 8 & 12 & 4 & 3 & 6 \\
 0 & 7 & 8 & 0 & 6 & 2 & 12 & 10 & 4 & 4 & 4 & 6 & 12 & 11 & 0 & 10 & 8 & 2 \\
 0 & 7 & 10 & 6 & 4 & 12 & 2 & 8 & 0 & 12 & 12 & 8 & 4 & 2 & 10 & 0 & 6 & 5 \\
 \hline
 0 & 0 & 0 & 0 & 0 & 0 & 0 & 0 & 0 & 7 & 7 & 7 & 7 & 7 & 7 & 7 & 7 & 7 \\
 0 & 7 & 7 & 9 & 11 & 13 & 1 & 3 & 5 & 7 & 0 & 0 & 2 & 4 & 6 & 8 & 10 & 12 \\
 \hline
 0 & 0 & 7 & 6 & 12 & 2 & 10 & 8 & 4 & 0 & 7 & 5 & 11 & 3 & 7 & 1 & 13 & 9 \\
 0 & 0 & 4 & 7 & 6 & 12 & 2 & 10 & 8 & 12 & 5 & 9 & 5 & 11 & 3 & 7 & 1 & 13 \\
 0 & 0 & 8 & 4 & 7 & 6 & 12 & 2 & 10 & 10 & 3 & 13 & 9 & 5 & 11 & 3 & 7 & 1 \\
 0 & 0 & 10 & 8 & 4 & 7 & 6 & 12 & 2 & 8 & 1 & 1 & 13 & 9 & 5 & 11 & 3 & 7 \\
 0 & 0 & 2 & 10 & 8 & 4 & 7 & 6 & 12 & 6 & 13 & 7 & 1 & 13 & 9 & 5 & 11 & 3 \\
 0 & 0 & 12 & 2 & 10 & 8 & 4 & 7 & 6 & 4 & 11 & 3 & 7 & 1 & 13 & 9 & 5 & 11 \\
 0 & 0 & 6 & 12 & 2 & 10 & 8 & 4 & 7 & 2 & 9 & 11 & 3 & 7 & 1 & 13 & 9 & 5 \\
\end{array}
\right].\]
For typographical reasons, we chose the (otherwise arbitrary) ordering of the rows $h'_{11},\dots, h'_{18}$ in a way so that circulant $7\times 7$ blocks to appear.
\end{example}

\medskip

\section{On $2$-circulant matrices of simple index $k$}\label{sec4}
In this section we recall the theory of circulant matrices of type `index $k$' as developed by Bj\"orck \cite{BJ}, and Haagerup \cite{bjhaa}, \cite{haagerupCyc}, and modify it accordingly to obtain several new examples of $2$-circulant Butson-type complex Hadamard matrices.

Let $p$ be a prime number, let $k$ be a positive integer dividing $p-1$, and let $\zeta_q:=\mathrm{exp}(2\pi\mathbf{i}/q)\in\mathbb{C}$ as before. Consider the unique subgroup $G_0$ of $(\mathbb{Z}_p^\ast;\cdot)$ of index $k$, namely the group of $k$th residues $G_0:=\{h^k\colon h\in \mathbb{Z}_p^\ast\}$. Let $g\in\mathbb{Z}_p^\ast$ be any generator, and let $G_{\ell}:=g^{\ell}\cdot G_0$, $\ell\in\{1,\dots,k-1\}$ be the $k-1$ other cosets of $G_0$ in $\mathbb{Z}_p^\ast$. Let $X$ be a circulant matrix of order $p$ whose first row $x=[x_0,x_1,\dots,x_{p-1}]$ is constant on the cosets of $G_0$, namely, $x_0:=1$, and $x_i=\zeta_q^{c_{\ell}}$ if $i\in G_\ell$ for every $i\in\{1,\dots, p-1\}$, where $[c_0,\dots,c_{k-1}]\in\mathbb{Z}_q^k$. Bj\"orck and Haagerup \cite{bjhaa} called the vectors $x$ `simple of index $k$', and systematically studied the cases $k\leq 3$ leading to circulant complex Hadamard matrices. The case $k=4$ was studied to some extent in \cite{phd}. Furthermore, circulant Butson-type matrices were studied in \cite{arasu} and \cite{slenker}.

\medskip

Here we consider the related problem of constructing two circulant matrices $X$ and $Y$ whose first rows are simple of index $k$, satisfying
\begin{equation}\label{star}
XX^\ast + YY^\ast=2pI_p.
\end{equation}
Since circulant matrices commute, for $q$ even this equation leads to a $2$-circulant $BH(2p,q)$ matrix of the form
\[H=\left[\begin{array}{cc} X & Y\\
Y^\ast & -X^\ast\end{array}\right].\]
Further, multiplying \eqref{star} by the all $1$ matrix $J$ from the left and right, we obtain a necessary condition relating the elements of $X$ and $Y$:
\begin{equation}\label{starcond}
\left|\sum_{i=0}^{p-1}x_i\right|^2+\left|\sum_{i=0}^{p-1}y_i\right|^2=2p.
\end{equation}
We attempt to construct such a matrix $H$ by symbolically computing the \textit{periodic autocorrelation coefficients} of the matrix $X$ for $s\in\{1,\dots,p-1\}$:
\[\gamma_s([c_0,c_1,\dots,c_{k-1}]):=\sum_{i=0}^{p-1}x_i\overline{x}_{i+s}\]
(where indices are taken modulo $p$), and put these in a vector $\Gamma(c):=[\gamma_1,\dots,\gamma_{p-1}]$. Our goal is to find \textit{complementary pairs} $a,b\in\mathbb{Z}_q^k$ such that $\Gamma(a)+\Gamma(b)=[0,\dots,0]$. In order to do this, we first create an empty set $S$, and then for every $c\in \mathbb{Z}_q^k$ we update $S=S\cup\{\Gamma(c)\}$ and then test whether $-\Gamma(c)\in S$. As soon as this latter holds, we discover a choice $a:=c$ having a complementary pair $b$ tested earlier. Creating an efficient data structure (i.e., a hash table) storing $\Gamma(c)$ with the corresponding $c$ can be conveniently facilitated by rewriting \eqref{starcond} in terms of $a$ and $b$. In particular, for a given $a$ a complementary $b$ must satisfy
\[\left|k+(p-1)\sum_{i=0}^{k-1}b_i\right|^2=2k^2p-\left|k+(p-1)\sum_{i=0}^{k-1}a_i\right|^2.\]
We remark that due to the imposed algebraic structure many of the autocorrelation coefficients coincide. We omit the details, and refer the interested reader to \cite[Section~7]{haagerupCyc}. As a toy example, we included in Table~\ref{T1} below the parameters of a newly discovered $BH(34,10)$ matrix.

\medskip

It is well-known that for prime numbers $p>2$ a $BH(2p,2)$ matrix cannot exist. Furthermore, $BH(2p,4)$ matrices have been extensively studied earlier \cite{holzmann}, \cite{DJ}, \cite{sebyam}, with the smallest outstanding case now being $BH(94,4)$. Since $47-1=2\cdot 23$, the method outlined here is not suitable to address this case. Therefore, our focus is on the more approachable $BH(2p,6)$ matrices. For $p=17$ and $p=29$ index $4$ type $BH(2p,6)$ matrices were constructed in \cite[p.~31]{phd}. Here we extend this list with three new examples.
\begin{theorem}
There exist $2$-circulant $BH(62,6)$, $BH(82,6)$, and $BH(146,6)$ matrices.
\end{theorem}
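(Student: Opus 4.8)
The plan is to realize each of the three matrices explicitly as a $2$-circulant block matrix
\[
H=\left[\begin{array}{cc} X & Y\\ Y^\ast & -X^\ast\end{array}\right],
\]
where $X$ and $Y$ are circulant of order $p\in\{31,41,73\}$ with first rows simple of index $k=4$ over $\mathbb{Z}_6$. Note that $31-1=30$, $41-1=40$, and $73-1=72$ are all divisible by $4$, so the index-$4$ construction of Section~\ref{sec4} applies in each case. For a fixed $p$ and $k=4$ the first row of $X$ is determined by a vector $c=[c_0,c_1,c_2,c_3]\in\mathbb{Z}_6^4$ via the cosets $G_0,G_1,G_2,G_3$ of the group of fourth powers in $\mathbb{Z}_p^\ast$; similarly $Y$ is determined by a vector $d\in\mathbb{Z}_6^4$. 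The task reduces to exhibiting, for each of the three primes, a \emph{complementary pair} $(c,d)$, that is, a pair for which the periodic autocorrelation vectors satisfy $\Gamma(c)+\Gamma(d)=[0,\dots,0]\in\mathbb{C}^{p-1}$. Once such a pair is found, equation~\eqref{star} holds by construction, and since circulant matrices commute, a direct block computation shows $HH^\ast=2pI_{2p}$, so $H$ is a $BH(2p,6)$ matrix with the claimed $2$-circulant structure.

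The key steps, in order, are as follows. First, for each prime $p$ compute a generator $g$ of $\mathbb{Z}_p^\ast$ and the four cosets $G_\ell=g^\ell G_0$, $\ell\in\{0,1,2,3\}$, of the subgroup $G_0$ of fourth residues. Second, observe that because the first rows of $X$ and $Y$ are constant on these cosets, the autocorrelation coefficients $\gamma_s(c)$ depend on $s$ only through the coset of $s$ under the action of $G_0$ on the nonzero residues by multiplication (more precisely, $\gamma_s$ and $\gamma_{s'}$ coincide whenever $s'\in G_0\cdot s$, since the map $i\mapsto s^{-1}s' i$ permutes the defining cosets), which collapses $\Gamma(c)$ into a vector with only a handful of distinct entries indexed by the $k=4$ cosets; this is the reduction alluded to after equation~\eqref{starcond} and detailed in \cite[Section~7]{haagerupCyc}. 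Third, use the necessary condition $\bigl|k+(p-1)\sum_{i}d_i\bigr|^2=2k^2p-\bigl|k+(p-1)\sum_{i}c_i\bigr|^2$ to prune the search over $\mathbb{Z}_6^4\times\mathbb{Z}_6^4$, and run the hash-table search described in Section~\ref{sec4}: store each $\Gamma(c)$ keyed by a hash that incorporates $\sum_i c_i$, and for each new $c$ test whether $-\Gamma(c)$ has already been recorded. Fourth, report the resulting pairs $(c,d)$ — for instance in a table analogous to Table~\ref{T1} — and verify by direct substitution that $XX^\ast+YY^\ast=2pI_p$; this verification is a finite symbolic check involving sums of sixth roots of unity over the explicit cosets.

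The main obstacle I expect is not conceptual but the combination of search-space size and the delicate arithmetic of cyclotomic sums: the raw space $\mathbb{Z}_6^4\times\mathbb{Z}_6^4$ has $6^8$ pairs, and for $p=73$ the autocorrelation vectors $\Gamma(c)$, even after the coset collapse, involve cancellations among $73$rd-order configurations of sixth roots of unity that must vanish \emph{exactly}. Thus the real work is (i) organizing the computation so that $\Gamma(c)$ is represented exactly — e.g.\ as an element of $\mathbb{Z}[\zeta_6]$ via integer coordinates rather than floating-point approximations — to avoid false positives or missed matches, and (ii) confirming that at least one complementary pair genuinely exists for each of $p=31,41,73$ (the condition~\eqref{starcond}, i.e.\ $\bigl|\sum x_i\bigr|^2+\bigl|\sum y_i\bigr|^2=2p$, is necessary but far from sufficient, so a priori the search could come up empty). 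I would also remark that for $p$ with $p-1$ having a large prime factor dividing out the index — as in the $BH(94,4)$ case noted in the text — this method is simply inapplicable, which is why the three primes above, all with $4\mid p-1$, are the natural targets.
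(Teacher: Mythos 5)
Your overall strategy is the same as the paper's: find a complementary pair of index-$k$ simple vectors $a,b\in\Z_q^k$ with $\Gamma(a)+\Gamma(b)=0$, so that $XX^\ast+YY^\ast=2pI_p$, and assemble the $2$-circulant block matrix. However, there are two genuine gaps. First, you fix the index at $k=4$ for all three primes, justified only by $4\mid p-1$. The paper's actual witnesses (Table~\ref{T1}) use $k=6$ for $p=31$ and $k=8$ for $p=41$ and $p=73$; the index-$4$ case had already been exploited for $p=17$ and $p=29$, and the new examples are precisely the ones requiring larger index. You have no evidence that an index-$4$ complementary pair exists for $p=31,41,73$ --- the search space $\Z_6^4$ is tiny ($1296$ vectors), and restricting to it would very plausibly come up empty, in which case your proof attempt fails outright. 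Choosing the right index (a divisor of $p-1$, and in these cases a larger one) is not a cosmetic detail: it changes which circulant matrices are even candidates.

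Second, and more fundamentally, this is an existence theorem whose proof \emph{is} the exhibited data. You describe the search procedure and correctly identify the danger that the search ``could come up empty,'' but you never produce the complementary pairs, so the existence of $BH(62,6)$, $BH(82,6)$, and $BH(146,6)$ is not established by your argument. The paper's one-line proof (``See Table~\ref{T1}'') works because the table lists concrete $a$ and $b$ for each case, which any reader can verify by a finite exact computation in $\Z[\zeta_6]$. A correct version of your write-up would have to carry out the search at the appropriate indices and record the successful pairs; as it stands, the proposal is a plan for a computation rather than a proof.
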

\begin{proof}
See Table~\ref{T1} for a list of complementary pairs $a$ and $b$, respectively.
\end{proof}

\begin{table}[htbp]
\begin{tabular}{ccccccc}
\toprule
$(n,q)$ & $p$ & $k$ & $g$ & $G_0$ & $a$ & $b$\\
\midrule
$(34,10)$ & $17$ & $4$ & $3$ & $\{1,4,13,16\}$ & $[8,2,6,4]$ & $[9,3,7,1]$\\
$(62,6)$ & $31$ & $6$ & $3$ & $\{1,2,4,8,16\}$ & $[0,2,2,4,5,1]$ & $[0,2,5,2,5,2]$\\
$(82,6)$ & $41$ & $8$ & $6$ & $\{1,10,16,18,37\}$ & $[5,4,1,2,5,4,1,2]$ & $[3,3,0,3,3,0,0,3]$\\
$(146,6)$ & $73$ & $8$ & $5$ & $\{1,2,4,8,16,32,37,55,64\}$ & $[3,5,1,3,5,3,5,1]$ & $[5,3,5,1,3,5,1,3]$\\
\bottomrule
\end{tabular}
\caption{Parameters of $2$-circulant $BH(n,q)$ matrices.}\label{T1}
\end{table}

Comparing the tables on \cite[p.~32]{phd} and \cite[p.~111]{guillermo}, it seems that $BH(46,6)$, $BH(74,6)$, $BH(86,6)$, and $BH(94,6)$ are the remaining undecided even orders less than $100$. The smallest outstanding odd order remains the challenging prime order case $BH(31,6)$.

We are certain that this method will eventually lead to further unexpected discoveries. Therefore, a systematic computer search for all $2$-circulant $BH(2p,q)$ matrices with reasonably small $p$, $q$ and $k$ is timely.

\section{Acknowledgement}

The authors are grateful to D\'aniel Varga for helpful discussions.

\medskip

\noindent
{\sc Domonkos Czifra}

\noindent
{\em HUN-REN Alfréd Rényi Institute of Mathematics, Reáltanoda u. 13-15, 1053,  Budapest, Hungary}

\noindent
e-mail address: \texttt{czifra.domonkos@renyi.hu}

\medskip

\noindent
{\sc Máté Matolcsi} 

\noindent
{\em HUN-REN Alfréd Rényi Institute of Mathematics, Reáltanoda u. 13-15, 1053, Budapest, Hungary, and\\
Department of Analysis and Operations Research,
Institute of Mathematics,
Budapest University of Technology and Economics,
Műegyetem rkp. 3., H-1111 Budapest, Hungary}

\noindent
e-mail address: \texttt{matolcsi.mate@renyi.hu}

\medskip

\noindent
{\sc Ferenc Sz\"oll\H osi}

\noindent
{\em Interdisciplinary Department of Science and Engineering, Shimane University,\\
1060 Nishikawatsu-cho, Matsue, Shimane, 690-8504, Japan}

\noindent
e-mail address: \texttt{szollosi@riko.shimane-u.ac.jp}
\end{document}